\theoremstyle{definition}
\newtheorem{definition}{Definition}
\newtheorem{algorithm}[definition]{Algorithm}
\newtheorem{remark}[definition]{Remark}
\theoremstyle{plain}
\newtheorem{lemma}[definition]{Lemma}
\begin{document}

\title{Ranks of 0-1 arrays of size $2 \times 2 \times 2$ and $2 \times 2 \times 2 \times 2$}

\author[Bremner]{Murray R. Bremner}

\address{Department of Mathematics and Statistics, University of Saskatchewan, Canada}

\email{bremner@math.usask.ca}

\author[Stavrou]{Stavros G. Stavrou}

\address{Department of Mathematics and Statistics, University of Saskatchewan, Canada}

\email{sgs715@mail.usask.ca}

\keywords{Multidimensional arrays, zero-one arrays, ranks, canonical forms, group actions, computer algebra}

\subjclass[2010]{Primary 15A69. Secondary 15-04, 15A21, 15B33, 20B25}

\begin{abstract}
We use computer algebra to determine the ranks of arrays of size $2 \times 2 \times 2$ and $2 \times 2 \times 2 \times 2$
with entries in the set $\{0,1\}$ regarded as a field with two elements, as a Boolean algebra, and as non-negative integers.
In the field case we also determine the canonical forms of the arrays with respect to the action of the direct product of
the general linear groups.
\end{abstract}

\maketitle


\section{Introduction}

Multidimensional arrays and the related topic of hyperdeterminants have
connections with algebraic geometry and representation theory, and applications
in numerical analysis, signal processing, chemometrics and psychometrics.
For the connections with algebraic geometry see \cite{GKZpaper,GKZbook,Landsberg};
for a connection with representation theory see \cite{Bremner}.
For surveys of the applications, see \cite{Cichocki,KoldaBader,Kroonenberg,Smilde}.

Most research on this topic assumes that the arrays have entries in $\mathbb{R}$ or $\mathbb{C}$,
the fields of real and complex numbers.
In this paper, we consider arrays with entries in $\{0,1\}$, which
can be regarded as the field $\mathbb{F}_2$ with two elements ($1+1=0$), as a Boolean algebra ($1+1=1$),
or as non-negative integers ($1+1=2$).
In this case, the total number of arrays is finite, and the problem of determining
the rank of an array reduces to combinatorial enumeration which can be performed by computer.
We obtain a classification by rank of all $2 \times 2 \times 2$ and
$2 \times 2 \times 2 \times 2$ arrays in these three cases.
In the field case we determine the canonical forms of the arrays with respect to
the action of the finite groups $GL_2(\mathbb{F}_2)^3$ and $GL_2(\mathbb{F}_2)^4$
and the extended groups $GL_2(\mathbb{F}_2)^3 \rtimes S_3$ and $GL_2(\mathbb{F}_2)^4 \rtimes S_4$.

The canonical forms of $2 \times 2 \times 2$ arrays have been determined over the real numbers
\cite{deSilvaLim} and over the complex numbers \cite{Ehrenborg,GKZpaper}.
Analogous results for $2 \times 2 \times 2 \times 2$ arrays over $\mathbb{R}$ or $\mathbb{C}$ have not yet
been found, but see \cite{Huggins}.
Since the results for $2 \times 2 \times 2$ arrays over $\mathbb{R}$ and $\mathbb{C}$ are similar to
the results in this paper over $\mathbb{F}_2$, we hope that our results for $2 \times 2 \times 2 \times 2$
arrays over $\mathbb{F}_2$ will provide some useful information towards a classification of canonical forms of
$2 \times 2 \times 2 \times 2$ arrays over $\mathbb{R}$ and $\mathbb{C}$.


\section{Preliminaries}

We consider $n$-dimensional arrays of size $2 \times \cdots \times 2$ ($n$ factors, $n = 3, 4$) with entries in $\{0,1\}$.

\begin{definition}
The \textbf{flattening} of the array $X = ( x_{i_1 \cdots i_n} )$, $1 \le i_1, \dots, i_n \le 2$, is
  \[
  \mathrm{flat}(X) =
  [
  \begin{array}{cccccccc}
  x_{1 \cdots 1} & \cdots & x_{i_1 \cdots i_n} & \cdots & x_{2 \cdots 2}
  \end{array}
  ],
  \]
where the entries are in lexicographical order of the $n$-tuples of subscripts:
$i_1 \cdots i_n$ precedes $i'_1 \cdots i'_n$ if and only if $i_j < i'_j$ where $j$ is the least index
with $i_j \ne i'_j$.
\end{definition}

\begin{definition} \label{orderdefinition}
If $X$ and $Y$ are two arrays then $X$ \textbf{precedes} $Y$ if
$\mathrm{flat}(X)$ precedes $\mathrm{flat}(Y)$ in lexicographical order:
that is, $x_{i_1 \cdots i_n} < y_{i_1 \cdots i_n}$ where $i_1 \cdots i_n$ is the least $n$-tuple
with $x_{i_1 \cdots i_n} \ne y_{i_1 \cdots i_n}$.
The \textbf{minimal element} of a set of arrays is defined with respect to this total order.
\end{definition}

\begin{definition}
There are three nonzero 2-dimensional vectors: $[0,1]$, $[1,0]$, $[1,1]$.
The \textbf{outer product} $X = V_1 \otimes \cdots \otimes V_n$ of $n$ vectors $V_j = [ v_{j1}, v_{j2} ]$,
$1 \le j \le n$, is
  \[
  X = ( x_{i_1 \cdots i_n} ),
  \qquad
  x_{i_1 \cdots i_n} = v_{1 i_1} \cdots v_{n i_n}.
  \]
\end{definition}

\begin{definition}
The \textbf{rank} of $X = ( x_{i_1 \cdots i_n} )$ is the minimal number $R$ of terms
in the expression of $X$ as a sum of outer products:
  \[
  X = \sum_{r=1}^R V_1^{(r)} \otimes \cdots \otimes V_n^{(r)}.
  \]
The definition of addition depends on the algebraic structure of $\{0,1\}$.  For the field
with two elements, $1+1 = 0$; for the Boolean algebra, $1+1 = 1$; for non-negative integers,
$1+1 = 2$, and this means that we exclude any sums in which two outer products both have an
entry 1 in the same position.
\end{definition}

\begin{lemma} \label{orderlemma}
The only array of rank 0 is the zero array.
An array of rank 1 is the same as an outer product of nonzero vectors,
and there are $3^n$ such arrays.
The total number of $n$-dimensional $2 \times \cdots \times 2$ arrays with entries in $\{0,1\}$ is $2^{2^n}$.
\end{lemma}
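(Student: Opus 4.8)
The plan is to verify the three claims in Lemma~\ref{orderlemma} separately, since each is essentially a direct counting argument once the relevant definitions are unwound.

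First I would establish that the only array of rank $0$ is the zero array. By the definition of rank, an array $X$ has rank $0$ precisely when it is expressible as an empty sum of outer products, and the empty sum is by convention the zero array; conversely, the zero array requires no terms, so it has rank $0$. No nonzero array can be written with zero terms, so the zero array is the unique array of rank $0$. This holds uniformly across all three algebraic structures on $\{0,1\}$.

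Next I would treat the rank-$1$ claim. By definition, an array of rank $1$ is exactly one that can be written as a single outer product $V_1 \otimes \cdots \otimes V_n$, and for this to have rank $1$ (rather than $0$) the product must be nonzero, which forces every factor $V_j$ to be a nonzero vector. To count these, I would use the fact from the Definition preceding the lemma that there are exactly three nonzero $2$-dimensional vectors over $\{0,1\}$, namely $[0,1]$, $[1,0]$, $[1,1]$. Since each of the $n$ factors is chosen independently from these three vectors, there are $3^n$ choices of tuples $(V_1, \dots, V_n)$. The one point requiring care here is that the map from tuples of nonzero vectors to outer products is a bijection, i.e.\ distinct tuples give distinct arrays; I would verify this by showing that from the entries of $X = V_1 \otimes \cdots \otimes V_n$ one can recover each $V_j$ up to the normalization already fixed by working with the three specific representatives, so that no two of the $3^n$ tuples collide. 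This injectivity step is the main obstacle, since without it the count $3^n$ would only be an upper bound.

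Finally I would count all arrays. An $n$-dimensional $2 \times \cdots \times 2$ array is a function assigning an entry in $\{0,1\}$ to each of the multi-indices $i_1 \cdots i_n$ with $1 \le i_1, \dots, i_n \le 2$. There are $2^n$ such multi-indices, and each entry is chosen freely and independently from the two-element set $\{0,1\}$, giving $2^{2^n}$ arrays in total. This is purely a statement about the cardinality of the set of arrays and does not depend on the choice of addition, so it holds in all three cases simultaneously.
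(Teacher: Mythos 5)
Your proof is correct. The paper states this lemma without any proof, treating it as immediate from the definitions, and your argument is the natural one; your observation that injectivity of the map from $n$-tuples of nonzero vectors to outer products must be checked (recoverable here because the support of $V_1 \otimes \cdots \otimes V_n$ is the nonempty product set $S_1 \times \cdots \times S_n$ with $S_j = \{\, i : v_{ji} = 1 \,\}$, and there is no nontrivial scaling over $\{0,1\}$) is exactly the one point that makes the count $3^n$ exact rather than an upper bound.
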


\begin{algorithm} \label{basicalgorithm}
Fix a dimension $n$.
Assume that we have already computed the arrays of rank $r$.
To compute the arrays of rank $r+1$, we consider all sums $X + Y$ where $\mathrm{rank}(X) = r$
and $\mathrm{rank}(Y) = 1$.  Clearly $\mathrm{rank}(X+Y) \le r+1$, but it is possible that
$\mathrm{rank}(X+Y) \le r$,
so we only retain those $X+Y$ which have not already been computed: the arrays which have rank exactly $r+1$.
This algorithm is presented in pseudocode for $n = 3$ in Table \ref{basicalgorithmtable}.
\end{algorithm}

\begin{table}

\begin{itemize}
\item[]
\texttt{flatten}$( x )$
\item[] \qquad
\texttt{return}$( [ \, x_{111}, \, x_{112}, \, x_{121}, \, x_{122}, \, x_{211}, \, x_{212}, \, x_{221}, \, x_{222} \, ] )$
\end{itemize}

\bigskip

\begin{itemize}
\item[]
\texttt{outerproduct}$( a, b, c )$
\item[] \qquad
for $i = 1, 2$ do for $j = 1, 2$ do for $k = 1, 2$ do:
\item[] \qquad \qquad
set $x_{ijk} \leftarrow a_i b_j c_k$
\item[] \qquad
\texttt{return}( $x$ )
\end{itemize}

\bigskip

\begin{itemize}
\item
set $\texttt{vectors} \leftarrow \{ \, [1,0], \, [0,1], \, [1,1] \, \}$
\item
set \texttt{arrayset}$[0] \leftarrow \{ [0,0,0,0,0,0,0,0] \}$
\item
set \texttt{arrayset}$[1] \leftarrow \{ \, \}$
\item
for $a$ in \texttt{vectors} do for $b$ in \texttt{vectors} do for $c$ in \texttt{vectors} do
  \begin{itemize}
  \item
  set $x \leftarrow \texttt{flatten}( \texttt{outerproduct}( a, b, c ) )$
  \item
  if $x \notin \texttt{arrayset}[0]$ and $x \notin \texttt{arrayset}[1]$ then
  \item[] \quad
  set $\texttt{arrayset}[1] \leftarrow \texttt{arrayset}[1] \cup \{ x \}$
  \end{itemize}
\item
set $r \leftarrow 1$
\item
while $\texttt{arrayset}[r] \ne \{ \, \}$ do:
  \begin{itemize}
  \item
  set $\texttt{arrayset}[r{+}1] \leftarrow \{ \, \}$
  \item
  for $x \in \texttt{arrayset}[r]$ do for $y \in \texttt{arrayset}[1]$ do
    \begin{itemize}
    \item
    set $z \leftarrow [ \, x_1{+}y_1, \, \dots, \, x_8{+}y_8 \, ]$
    \item
    if $z \notin \texttt{arrayset}[s]$ for $s = 0, \dots, r{+}1$ then
    \item[] \quad
      set $\texttt{arrayset}[r{+}1] \leftarrow \texttt{arrayset}[r{+}1] \cup \{ z \}$
    \end{itemize}
    \item
    set $r \leftarrow r + 1$
  \end{itemize}
\item
set $\texttt{maximumrank} \leftarrow r - 1$
\end{itemize}

\bigskip

\caption{Algorithm \ref{basicalgorithm} in pseudocode}
\label{basicalgorithmtable}
\end{table}

If $\{0,1\}$ is the field $\mathbb{F}_2$ with two elements then we consider the action of
the direct product of $n$ copies of the general linear group $GL_2(\mathbb{F}_2)$.

\begin{definition}
The group $GL_2(\mathbb{F}_2)$ consists of six matrices in lexicographical order:
  \[
  \left[ \begin {array}{cc} 0 & 1 \\ 1 & 0 \end{array} \right], \quad
  \left[ \begin {array}{cc} 0 & 1 \\ 1 & 1 \end{array} \right], \quad
  \left[ \begin {array}{cc} 1 & 0 \\ 0 & 1 \end{array} \right], \quad
  \left[ \begin {array}{cc} 1 & 0 \\ 1 & 1 \end{array} \right], \quad
  \left[ \begin {array}{cc} 1 & 1 \\ 0 & 1 \end{array} \right], \quad
  \left[ \begin {array}{cc} 1 & 1 \\ 1 & 0 \end{array} \right].
  \]
\end{definition}

\begin{lemma}
The group $GL_2(\mathbb{F}_2)$ is isomorphic to $S_3$ permuting the nonzero vectors.
\end{lemma}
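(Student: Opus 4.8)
The plan is to realize the claimed isomorphism explicitly through the natural linear action and then close the argument with a counting step. By the preceding definition there are exactly three nonzero vectors in $\mathbb{F}_2^2$, namely $[0,1]$, $[1,0]$, $[1,1]$, and each $M \in GL_2(\mathbb{F}_2)$ acts on $\mathbb{F}_2^2$ by the natural linear action. Since $M$ is invertible it sends nonzero vectors to nonzero vectors, so this action restricts to a permutation of the three-element set $\{ [0,1], [1,0], [1,1] \}$. This assignment defines a map $\phi \colon GL_2(\mathbb{F}_2) \to S_3$, where $S_3$ is identified with the symmetric group on these three vectors.

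First I would verify that $\phi$ is a group homomorphism. This is immediate from associativity of the linear action: the permutation of the nonzero vectors induced by a product $MN$ is the composite of the permutations induced by $M$ and by $N$, so $\phi(MN) = \phi(M)\phi(N)$.

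Next I would establish injectivity by identifying the kernel. Suppose $M$ lies in $\ker \phi$, so that $M$ fixes each of the three nonzero vectors; in particular it fixes $[1,0]$ and $[0,1]$, which form a basis of $\mathbb{F}_2^2$. A linear map fixing a basis is the identity, so $M$ is the identity matrix, and hence $\ker \phi$ is trivial.

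Finally, both groups have order $6$: we have $|GL_2(\mathbb{F}_2)| = 6$ from the six matrices listed in the preceding definition, and $|S_3| = 6$. An injective homomorphism between finite groups of equal order is automatically surjective, hence an isomorphism, and by construction it is exactly the map given by permutation of the nonzero vectors. There is essentially no obstacle in this argument; the only point meriting care is the faithfulness of the action, which reduces to the observation that two of the three nonzero vectors already constitute a basis of $\mathbb{F}_2^2$.
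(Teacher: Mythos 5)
Your proof is correct and complete; the paper itself states this lemma without proof, treating it as evident. Your argument --- the natural action on the three nonzero vectors gives a homomorphism to $S_3$ whose kernel is trivial because two of those vectors form a basis, and equality of orders ($6 = 6$) then forces surjectivity --- is exactly the standard justification one would supply.
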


\begin{definition} \label{groupdefinition}
The \textbf{small symmetry group} of $2 \times \cdots \times 2$ arrays
over $\mathbb{F}_2$ is the direct product $GL_2(\mathbb{F}_2)^n$ acting by
simultaneous changes of basis along the $n$ directions.
The \textbf{large symmetry group} of these arrays is the semi-direct product
$GL_2(\mathbb{F}_2)^n \rtimes S_n$ where $S_n$ acts by permuting the $n$ copies of
$GL_2(\mathbb{F}_2)$.
The element $A \in GL_2(\mathbb{F}_2)$
acts along the first direction of an array $X = ( x_{i_1 \cdots i_n} )$ as follows:
for each $(n{-}1)$-tuple $i_2 \cdots i_n$ we consider the column vector
  \[
  V_{i_2 \cdots i_n} = [ x_{1 i_2 \cdots i_n}, x_{2 i_2 \cdots i_n} ]^t \in \mathbb{F}_2^2,
  \]
and compute $A V_{i_2 \cdots i_n} = [ y_{1 i_2 \cdots i_n}, y_{2 i_2 \cdots i_n} ]^t \in \mathbb{F}_2^2$;
then we define the array $A \cdot X$ to be $Y = ( y_{i_1 \cdots i_n} )$.
The actions along the other $n-1$ directions are similar.
\end{definition}

\begin{lemma}
The actions of the symmetry groups do not change the rank.
\end{lemma}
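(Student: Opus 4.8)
The plan is to reduce to generators and then exploit the multilinearity of the outer product together with the invertibility of the group elements. The large symmetry group $GL_2(\mathbb{F}_2)^n \rtimes S_n$ is generated by the single-direction actions of $GL_2(\mathbb{F}_2)$ (one copy of the direct product at a time, since the actions along distinct directions commute) together with the permutations in $S_n$. Because the property of preserving rank is closed under composition, it suffices to verify it for each of these two kinds of generator; I will write $A \cdot_j X$ for the action of $A \in GL_2(\mathbb{F}_2)$ along direction $j$, as in Definition \ref{groupdefinition}.

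First I would establish the key multilinearity identity. Fix a direction $j$ and a matrix $A \in GL_2(\mathbb{F}_2)$. For a single outer product $V_1 \otimes \cdots \otimes V_n$, the column vector extracted along direction $j$ (with the remaining indices fixed at $i_1, \dots, \widehat{i_j}, \dots, i_n$) equals $c \, V_j$, where $c = \prod_{k \ne j} v_{k i_k}$ is the product of the entries in the other directions. Applying $A$ and using linearity gives $A(c V_j) = c (A V_j)$, so that
\[
A \cdot_j ( V_1 \otimes \cdots \otimes V_n )
= V_1 \otimes \cdots \otimes (A V_j) \otimes \cdots \otimes V_n.
\]
Because $A$ is invertible and $V_j \ne 0$, the vector $A V_j$ is again nonzero, so the action carries rank-one arrays to rank-one arrays.

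Next I would use linearity of the action over the field. Since we are in the case $\mathbb{F}_2$, addition of arrays is $\mathbb{F}_2$-linear and acting by $A$ along direction $j$ is an $\mathbb{F}_2$-linear map on the space of arrays; applying it to a rank-$R$ decomposition $X = \sum_{r=1}^R V_1^{(r)} \otimes \cdots \otimes V_n^{(r)}$ yields
\[
A \cdot_j X = \sum_{r=1}^R V_1^{(r)} \otimes \cdots \otimes (A V_j^{(r)}) \otimes \cdots \otimes V_n^{(r)},
\]
a sum of $R$ outer products; hence $\mathrm{rank}(A \cdot_j X) \le \mathrm{rank}(X)$. The same reasoning applied to $A^{-1}$ (which also lies in $GL_2(\mathbb{F}_2)$) gives $\mathrm{rank}(X) = \mathrm{rank}(A^{-1} \cdot_j (A \cdot_j X)) \le \mathrm{rank}(A \cdot_j X)$, so equality holds. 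For a permutation $\sigma \in S_n$ the argument is even simpler: $\sigma$ sends $V_1 \otimes \cdots \otimes V_n$ to the outer product with its factors reordered, so it again maps rank-one arrays to rank-one arrays and, by linearity, length-$R$ sums to length-$R$ sums; invertibility of $\sigma$ then gives equality.

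The argument is essentially routine; the only point requiring care is the multilinearity identity, and in particular checking that the single-direction action factors through the scalar $c$ so that $A$ can be pulled inside the outer product. Once that identity is in hand, the two inequalities coming from $g$ and $g^{-1}$ combine to give rank-invariance for every generator, and hence for the whole symmetry group.
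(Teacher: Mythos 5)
Your argument is correct, but it is not the route the paper takes: the paper disposes of this lemma in one line by citing de Silva and Lim \cite{deSilvaLim} (their Lemma 2.3), remarking only that their proof works over any field. What you have written out is essentially the standard argument that underlies that citation, done from scratch: reduce to the generators of $GL_2(\mathbb{F}_2)^n \rtimes S_n$, verify the identity $A \cdot_j (V_1 \otimes \cdots \otimes V_n) = V_1 \otimes \cdots \otimes (AV_j) \otimes \cdots \otimes V_n$ by factoring the extracted column vector as a scalar multiple of $V_j$, extend by linearity of the single-direction action to a length-$R$ decomposition to get $\mathrm{rank}(A \cdot_j X) \le \mathrm{rank}(X)$, and then invoke $A^{-1}$ for the reverse inequality; the permutation generators are handled by reordering factors. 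All of these steps are sound, and your explicit note that $AV_j \ne 0$ when $V_j \ne 0$ is a nice touch (though not strictly needed for the inequality, since a vanishing term could only shorten the sum). What your version buys is self-containedness and a clear indication of exactly where invertibility and multilinearity enter; what the paper's version buys is brevity and an explicit pointer to the literature. One small caution: your argument, like the paper's definition of the symmetry groups, is specific to the field case, which is the only case where the lemma is asserted --- it would not transfer to the Boolean or non-negative-integer settings, where no such group action is defined.
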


\begin{proof}
de Silva and Lim \cite[Lemma 2.3, page 1092]{deSilvaLim} applies to any field.
\end{proof}

The actions of the symmetry groups decompose the set of $n$-dimensional arrays
into a disjoint union of orbits; the arrays in each orbit are equivalent under the group action.

\begin{algorithm} \label{groupalgorithm}
Fix a dimension $n$ and consider $2 \times \cdots \times 2$ arrays over $\mathbb{F}_2$.
Assume we have computed the set of arrays for each possible rank,
and that these sets are totally ordered.
For each rank, we perform the following iteration:
  \begin{itemize}
  \item Choose the minimal element of the set of arrays.
  \item Compute the orbit of this element under the action of the symmetry group.
  \item Remove the elements of this orbit from the set of arrays of the given rank.
  \end{itemize}
This iteration terminates when there are no more arrays of the given rank.
This algorithm is presented in pseudocode for $n = 3$ in Table \ref{groupalgorithmtable}.
\end{algorithm}

\begin{table}

\begin{itemize}
\item[]
\texttt{unflatten}$( x )$
\item[] \qquad
set $t \leftarrow 0$
\item[] \qquad
for $i = 1, 2$ do for $j = 1, 2$ do for $k = 1, 2$ do:
\item[] \qquad \qquad
set $t \leftarrow t + 1$
\item[] \qquad \qquad
set $y_{ijk} \leftarrow x_t$
\item[] \qquad
$\texttt{return}( y )$
\end{itemize}

\bigskip

\begin{itemize}
\item[]
\texttt{groupaction}$( g, x, m )$
  \begin{itemize}
  \item[]
  set $y \leftarrow \texttt{unflatten}( x )$
  \item[]
  if $m = 1$ then
  \item[] \qquad
  for $j = 1,2$ do for $k = 1,2$ do
  \item[] \qquad \qquad
  set $v \leftarrow [ \, y_{1jk}, \, y_{2jk} \, ]$
  \item[] \qquad \qquad
  set $w \leftarrow [ \, g_{11} v_1 + g_{12} v_2, \, g_{21} v_1 + g_{22} v_2 \, ]$
  \item[] \qquad \qquad
  for $i = 1, 2$ do: set $y_{ijk} \leftarrow w_i$
  \item[]
  if $m = 2$ then \dots \emph{(similar for second subscript)}
  \item[]
  if $m = 3$ then \dots \emph{(similar for third subscript)}
  \item[]
  \texttt{return}( \texttt{flatten}( $y$ ) )
  \end{itemize}
\end{itemize}

\bigskip

\begin{itemize}
\item[]
\texttt{smallorbit}$( x )$
  \begin{itemize}
  \item[]
  set $\texttt{result} \leftarrow \{\,\}$
  \item[]
  for $a \in GL_2(\mathbb{F}_2)$ do:
  \item[] \qquad
  set $y \leftarrow \texttt{groupaction}( a, x, 1 )$
  \item[] \qquad
  for $b \in GL_2(\mathbb{F}_2)$ do:
  \item[] \qquad \qquad
  set $z \leftarrow \texttt{groupaction}( b, y, 2 )$
  \item[] \qquad \qquad
  for $c \in GL_2(\mathbb{F}_2)$ do:
  \item[] \qquad \qquad \qquad
  set $w \leftarrow \texttt{groupaction}( c, z, 3 )$
  \item[] \qquad \qquad \qquad
  set $\texttt{result} \leftarrow \texttt{result} \cup \{ w \}$
  \item[]
  \texttt{return}( \texttt{result} )
  \end{itemize}
\end{itemize}

\bigskip

\begin{itemize}
\item[]
\texttt{largeorbit}$( x )$
  \begin{itemize}
  \item[]
  set $y \leftarrow \texttt{unflatten}( x )$
  \item[]
  set $\texttt{result} \leftarrow \{\,\}$
  \item[]
  for $p \in S_3$ do:
  \item[] \qquad
  for $i = 1,2$ do for $j = 1,2$ do for $k = 1,2$ do:
  \item[] \qquad \qquad
  set $m \leftarrow [i,j,k]$
  \item[] \qquad \qquad
  set $z_{ijk} \leftarrow y_{m_{p(1)}m_{p(2)}m_{p(3)}}$
  \item[] \qquad
  set $\texttt{result} \leftarrow \texttt{result} \cup \texttt{smallorbit}( \texttt{flatten}( z ) )$
  \item[]
  \texttt{return}( \texttt{result} )
  \end{itemize}
\end{itemize}

\bigskip

\begin{itemize}
\item
for $r = 0,\dots,\texttt{maximumrank}$ do:
\item[] \qquad
set $\texttt{representatives}[r] \leftarrow \{\,\}$
\item[] \qquad
set $\texttt{remaining} \leftarrow \texttt{arrayset}[r]$
\item[] \qquad
while $\texttt{remaining} \ne \{\,\}$ do:
\item[] \qquad \qquad
set $x \leftarrow \texttt{remaining}[1]$
\item[] \qquad \qquad
set $\texttt{xorbit} \leftarrow \texttt{largeorbit}( x )$
\item[] \qquad \qquad
append $\texttt{xorbit}[1]$ to $\texttt{representatives}[r]$
\item[] \qquad \qquad
set $\texttt{remaining} \leftarrow \texttt{remaining} \setminus \texttt{xorbit}$
\end{itemize}

\bigskip

\caption{Algorithm \ref{groupalgorithm} in pseudocode}
\label{groupalgorithmtable}
\end{table}

\begin{definition}
The minimal element in each orbit is the \textbf{canonical form} of the arrays in that orbit.
\end{definition}

\begin{lemma}
Lower bounds for the number of canonical forms for the small symmetry group and the large symmetry group
are respectively
  \[
  \left\lceil
  \frac{2^{2^n}}{6^n}
  \right\rceil,
  \qquad
  \left\lceil
  \frac{2^{2^n}}{6^n n!}
  \right\rceil.
  \]
\end{lemma}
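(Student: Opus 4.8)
The plan is to use a straightforward orbit-counting argument based on the orbit--stabilizer theorem. First I would recall the two ingredients already established in the excerpt: by Lemma \ref{orderlemma} the total number of $n$-dimensional $2 \times \cdots \times 2$ arrays over $\{0,1\}$ is $2^{2^n}$, and since $|GL_2(\mathbb{F}_2)| = 6$ the small symmetry group $GL_2(\mathbb{F}_2)^n$ has order $6^n$ while the large symmetry group $GL_2(\mathbb{F}_2)^n \rtimes S_n$ has order $6^n \, n!$. The number of canonical forms equals the number of orbits, because each orbit contributes exactly one canonical form, namely its minimal element with respect to the total order of Definition \ref{orderdefinition}.

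The key step is the observation that, by the orbit--stabilizer theorem, the size of any orbit divides the order of the acting group and is therefore at most that order. Writing $G$ for either symmetry group and $N$ for the number of orbits, the orbits partition the full set of arrays, so the sum of their sizes equals $2^{2^n}$; since each of these $N$ summands is at most $|G|$, we obtain $2^{2^n} \le N \, |G|$, and hence $N \ge 2^{2^n} / |G|$.

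Finally, since $N$ is a positive integer, the inequality $N \ge 2^{2^n} / |G|$ can be strengthened to $N \ge \lceil 2^{2^n} / |G| \rceil$. Substituting $|G| = 6^n$ for the small symmetry group and $|G| = 6^n \, n!$ for the large symmetry group yields the two stated lower bounds.

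I do not expect any genuine obstacle in this argument; the only point that merits a word of care is precisely why these are lower bounds rather than exact counts. Orbits whose stabilizers are nontrivial have size strictly smaller than $|G|$, so in general the quotient $2^{2^n}/|G|$ is not an integer and the true number of canonical forms typically exceeds the ceiling. Thus the bound is sharp only in the absence of such fixed points, which is why the statement is phrased as an inequality.
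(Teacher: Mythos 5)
Your argument is correct and is exactly the standard orbit-counting reasoning that the paper's own proof summarizes in one line as ``the claim follows from the theory of group actions on a finite set.'' You have simply made explicit the step (each orbit has size at most $|G|$, the orbits partition the $2^{2^n}$ arrays, and the number of orbits is an integer) that the paper leaves to the reader.
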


\begin{proof}
Lemma \ref{orderlemma} shows that there are $2^{2^n}$ such arrays.
Definition \ref{groupdefinition} implies that the small and large symmetry groups have orders $6^n$
and $6^n n!$ respectively.
The claim follows from the theory of group actions on a finite set.
\end{proof}

\begin{remark}
For $3 \le n \le 6$, we have the following lower bounds for the number of orbits for the small and large symmetry groups.
From this it is clear that complete results will not be publishable for $n \ge 5$:
  \begin{center}
  \begin{tabular}{lrr}
  $n$ &\qquad lower bound (small group) &\qquad lower bound (large group) \\
  3 &\qquad 2 &\qquad 1 \\
  4 &\qquad 51 &\qquad 3 \\
  5 &\qquad 552337 &\qquad 4603 \\
  6 &\qquad 395377745064077 &\qquad 549135757034
  \end{tabular}
  \end{center}
\end{remark}


  \begin{table}
  \[
  \begin{array}{ccc}
  \text{rank} & \text{orbit size} & \text{canonical form}
  \\
  \toprule
  0 &   1 & \left[ \begin{array}{cc|cc} 0 & 0 & 0 & 0 \\ 0 & 0 & 0 & 0 \end{array} \right]
  \\
  \midrule
  1 &  27 & \left[ \begin{array}{cc|cc} 0 & 0 & 0 & 0 \\ 0 & 0 & 0 & 1 \end{array} \right]
  \\
  \midrule
  2 &  54 & \left[ \begin{array}{cc|cc} 0 & 0 & 0 & 1 \\ 0 & 0 & 1 & 0 \end{array} \right]
  \\[8pt]
  2 & 108 & \left[ \begin{array}{cc|cc} 0 & 0 & 1 & 0 \\ 0 & 1 & 0 & 0 \end{array} \right]
  \\
  \midrule
  3 &  54 & \left[ \begin{array}{cc|cc} 0 & 0 & 0 & 1 \\ 0 & 1 & 1 & 0 \end{array} \right]
  \\[8pt]
  3 &  12 & \left[ \begin{array}{cc|cc} 0 & 1 & 1 & 0 \\ 1 & 0 & 1 & 1 \end{array} \right]
  \\
  \bottomrule
  \end{array}
  \]
  \medskip
  \caption{Large orbits of $2 \times 2 \times 2$ arrays over $\mathbb{F}_2$}
  \label{table222modular}
  \end{table}

  \begin{table}
  \[
  \begin{array}{cccc}
  \text{rank} & \text{ones} & \text{number} & \text{representative}
  \\
  \toprule
  0 &    0 &   1 & \left[ \begin{array}{cc|cc} 0 & 0 & 0 & 0 \\ 0 & 0 & 0 & 0 \end{array} \right] \\ \midrule
  1 &    1 &   8 & \left[ \begin{array}{cc|cc} 0 & 0 & 0 & 0 \\ 0 & 0 & 0 & 1 \end{array} \right] \\[8pt]
  1 &    2 &  12 & \left[ \begin{array}{cc|cc} 0 & 0 & 0 & 0 \\ 0 & 0 & 1 & 1 \end{array} \right] \\[8pt]
  1 &    4 &   6 & \left[ \begin{array}{cc|cc} 0 & 0 & 1 & 1 \\ 0 & 0 & 1 & 1 \end{array} \right] \\[8pt]
  1 &    8 &   1 & \left[ \begin{array}{cc|cc} 1 & 1 & 1 & 1 \\ 1 & 1 & 1 & 1 \end{array} \right] \\ \midrule
  2 &    2 &  16 & \left[ \begin{array}{cc|cc} 0 & 0 & 0 & 1 \\ 0 & 0 & 1 & 0 \end{array} \right] \\[8pt]
  2 &    3 &  48 & \left[ \begin{array}{cc|cc} 0 & 0 & 0 & 1 \\ 0 & 0 & 1 & 1 \end{array} \right] \\[8pt]
  2 &    4 &  30 & \left[ \begin{array}{cc|cc} 0 & 0 & 1 & 0 \\ 0 & 1 & 1 & 1 \end{array} \right] \\[8pt]
  2 &    5 &  24 & \left[ \begin{array}{cc|cc} 0 & 0 & 1 & 1 \\ 0 & 1 & 1 & 1 \end{array} \right] \\[8pt]
  2 &    6 &  12 & \left[ \begin{array}{cc|cc} 0 & 0 & 1 & 1 \\ 1 & 1 & 1 & 1 \end{array} \right] \\ \midrule
  3 &    3 &   8 & \left[ \begin{array}{cc|cc} 0 & 0 & 0 & 1 \\ 0 & 1 & 1 & 0 \end{array} \right] \\[8pt]
  3 &    4 &  32 & \left[ \begin{array}{cc|cc} 0 & 0 & 0 & 1 \\ 0 & 1 & 1 & 1 \end{array} \right] \\[8pt]
  3 &    5 &  24 & \left[ \begin{array}{cc|cc} 0 & 0 & 1 & 1 \\ 1 & 1 & 0 & 1 \end{array} \right] \\[8pt]
  3 &    6 &  16 & \left[ \begin{array}{cc|cc} 0 & 1 & 1 & 1 \\ 1 & 0 & 1 & 1 \end{array} \right] \\[8pt]
  3 &    7 &   8 & \left[ \begin{array}{cc|cc} 0 & 1 & 1 & 1 \\ 1 & 1 & 1 & 1 \end{array} \right] \\ \midrule
  4 &    4 &   2 & \left[ \begin{array}{cc|cc} 0 & 1 & 1 & 0 \\ 1 & 0 & 0 & 1 \end{array} \right] \\[8pt]
  4 &    5 &   8 & \left[ \begin{array}{cc|cc} 0 & 1 & 1 & 0 \\ 1 & 0 & 1 & 1 \end{array} \right] \\
  \bottomrule
  \end{array}
  \]
  \medskip
  \caption{Ranks and minimal representatives for $2 \times 2 \times 2$ Boolean arrays}
  \label{table222boolean}
  \end{table}

\section{Arrays of size $2 \times 2 \times 2$}

The set of $2 \times 2 \times 2$ arrays with entries in $\{0,1\}$ contains 256 elements.
We represent such an array $X = ( x_{ijk} )$ in the matrix form
  \[
  \mathrm{Mat}(X) =
  \left[
  \begin{array}{cc|cc}
  x_{111} & x_{121} & x_{112} & x_{122} \\
  x_{211} & x_{221} & x_{212} & x_{222}
  \end{array}
  \right],
  \]
where the third subscript distinguishes the left and right blocks, which are the first and second
frontal slices.

\subsection{The field with two elements ($1+1=0$)}

Algorithm \ref{basicalgorithm} shows that in this case the maximum rank is 3; the number of arrays
of each rank is 1, 27, 162, 66.
The percentages of ranks 0, 1, 2, 3 are approximately 0, 11, 63, 26; in contrast, the percentages
over $\mathbb{R}$ are approximately 0, 0, 79, 21.
For the large symmetry group $GL_2(\mathbb{F}_2)^3 \rtimes S_3$, the ranks, orbit sizes, and canonical forms
are given in Table \ref{table222modular}.
For the small symmetry group $GL_2(\mathbb{F}_2)^3$, the first orbit in rank 2 splits into three orbits
each of size 18 with canonical forms
  \[
  \left[ \begin{array}{cc|cc} 0 & 0 & 0 & 1 \\ 0 & 0 & 1 & 0 \end{array} \right],
  \qquad
  \left[ \begin{array}{cc|cc} 0 & 0 & 0 & 0 \\ 0 & 1 & 1 & 0 \end{array} \right],
  \qquad
  \left[ \begin{array}{cc|cc} 0 & 0 & 0 & 1 \\ 0 & 1 & 0 & 0 \end{array} \right].
  \]
For the small symmetry group, there are eight orbits, the same as in the real case.

\subsection{The Boolean algebra ($1+1=1$)}

The maximum rank is 4; the number of arrays of each rank is 1, 27, 130, 88, 10.
The percentages of ranks 0, 1, 2, 3, 4 are approximately 0, 11, 51, 34, 4.
Instead of canonical forms for a group action, which do not exist in the Boolean case, we partition the arrays in each rank
by the number of entries equal to 1; the results are given in Table \ref{table222boolean}.

\subsection{Non-negative integers ($1+1=2$)}

The results are the same as in the Boolean case.
This has the corollary that every $2 \times 2 \times 2$ Boolean array of rank $r$
can be written as the sum of $r$ outer products such that no two terms have an entry 1 in the same position.
(When we consider arrays of size $2 \times 2 \times 2 \times 2$, the Boolean and integer cases
will no longer be identical.)


\section{Arrays of size $2 \times 2 \times 2 \times 2$}

The set of $2 \times 2 \times 2 \times 2$ arrays with entries in $\{0,1\}$ contains 65536 elements.

\subsection{The field with two elements ($1+1=0$)}

Algorithm \ref{basicalgorithm} shows that in this case the maximum rank is 6.
The number of arrays of each rank and the approximate percentages are as follows:
  \[
  \begin{array}{lrrrrrrr}
  \text{rank} &\; 0 &\; 1 &\; 2 &\; 3 &\; 4 &\; 5 &\; 6 \\
  \text{number} &\; 1 &\; 81 &\; 2268 &\; 21744 &\; 37530 &\; 3888 &\; 24 \\
  \approx\% &\; 0.002 &\; 0.124 &\; 3.461 &\; 33.179 &\; 57.266 &\; 5.933 &\; 0.037
  \end{array}
  \]
For the large symmetry group $GL_2(\mathbb{F}_2)^4 \rtimes S_4$, there are 30 orbits;
the ranks, orbit sizes, and canonical forms are given in Table \ref{table2222modular}.
For the small symmetry group $GL_2(\mathbb{F}_2)^4$, there are 112 orbits.
The large orbits split into small orbits as follows, where we mention only those
large orbits that are not small orbits, and write $x \rightarrow y \cdot z$
to indicate that large orbit $x$ splits into $y$ small orbits each of size $z$:
  \begin{center}
  \begin{tabular}{llll}
  rank 2 &\quad rank 3 &\quad rank 4 &\quad rank 5 \\
  \midrule
  $ 3 \rightarrow 6 \cdot   54$ &\quad $ 6 \rightarrow 4 \cdot  162$ &\quad $15 \rightarrow  4 \cdot  648$ &\quad $26 \rightarrow 6 \cdot 108$ \\
  $ 4 \rightarrow 4 \cdot  324$ &\quad $ 7 \rightarrow 4 \cdot   36$ &\quad $16 \rightarrow  4 \cdot 1296$ \\
                                &\quad $ 8 \rightarrow 6 \cdot  648$ &\quad $17 \rightarrow  3 \cdot   36$ \\
                                &\quad $ 9 \rightarrow 4 \cdot  648$ &\quad $18 \rightarrow  3 \cdot  324$ \\
                                &\quad $10 \rightarrow 4 \cdot  648$ &\quad $19 \rightarrow  6 \cdot  324$ \\
                                &\quad $11 \rightarrow 3 \cdot 1296$ &\quad $20 \rightarrow  3 \cdot  648$ \\
                                &\quad $12 \rightarrow 6 \cdot 1296$ &\quad $21 \rightarrow 12 \cdot  648$ \\
                                &\quad                               &\quad $22 \rightarrow  6 \cdot  216$ \\
                                &\quad                               &\quad $23 \rightarrow  6 \cdot 1296$ \\
                                &\quad                               &\quad $24 \rightarrow  3 \cdot 1296$ \\
                                &\quad                               &\quad $25 \rightarrow  6 \cdot  648$ \\
  \midrule
  \end{tabular}
  \end{center}

\subsection{The Boolean algebra ($1+1=1$)}

The maximum rank is 8.  The number of arrays of each rank and the approximate percentages are as follows:
  \[
  \begin{array}{lrrrrrrrrr}
  \text{rank} &\, 0 &\, 1 &\, 2 &\, 3 &\, 4 &\, 5 &\, 6 &\, 7 &\, 8 \\
  \text{number} &\, 1 &\, 81 &\, 1804 &\, 13472 &\, 28904 &\, 17032 &\, 3704 &\, 512 &\, 26 \\
  \approx\% &\, 0.002 &\, 0.124 &\, 2.753 &\, 20.557 &\, 44.104 &\, 25.989 &\, 5.652 &\, 0.781 &\, 0.04
  \end{array}
  \]
The results are given in Table \ref{table2222boolean} by rank and number of entries equal to 1.

\subsection{Non-negative integers ($1+1=2$)}

The maximum rank is 8.
The number of arrays of each rank and the approximate percentages are as follows:
  \[
  \begin{array}{lrrrrrrrrr}
  \text{rank} &\, 0 &\, 1 &\, 2 &\, 3 &\, 4 &\, 5 &\, 6 &\, 7 &\, 8 \\
  \text{number} &\, 1 &\, 81 &\, 1756 &\, 12848 &\, 28788 &\, 17568 &\, 3908 &\, 560 &\, 26 \\
  \approx\% &\, 0.002 &\, 0.124 &\, 2.679 &\, 19.604 &\, 43.927 &\, 26.807 &\, 5.963 &\, 0.854 &\, 0.04
  \end{array}
  \]
The results are given in Table \ref{table2222integer} by rank and number of entries equal to 1.

  \begin{table}
  \[
  \begin{array}{cccc} \\
  & \text{rank} & \text{large orbit size} &\quad \text{canonical form (flattened)} \\
  \toprule
   1 & 0 &     1  &\quad 0 \; 0 \; 0 \; 0 \; 0 \; 0 \; 0 \; 0 \; 0 \; 0 \; 0 \; 0 \; 0 \; 0 \; 0 \; 0 \\
  \midrule
   2 & 1 &    81  &\quad 0 \; 0 \; 0 \; 0 \; 0 \; 0 \; 0 \; 0 \; 0 \; 0 \; 0 \; 0 \; 0 \; 0 \; 0 \; 1 \\
  \midrule
   3 & 2 &   324  &\quad 0 \; 0 \; 0 \; 0 \; 0 \; 0 \; 0 \; 0 \; 0 \; 0 \; 0 \; 0 \; 0 \; 1 \; 1 \; 0 \\
   4 & 2 &  1296  &\quad 0 \; 0 \; 0 \; 0 \; 0 \; 0 \; 0 \; 0 \; 0 \; 0 \; 0 \; 1 \; 1 \; 0 \; 0 \; 0 \\
   5 & 2 &   648  &\quad 0 \; 0 \; 0 \; 0 \; 0 \; 0 \; 0 \; 1 \; 1 \; 0 \; 0 \; 0 \; 0 \; 0 \; 0 \; 0 \\
  \midrule
   6 & 3 &   648  &\quad 0 \; 0 \; 0 \; 0 \; 0 \; 0 \; 0 \; 0 \; 0 \; 0 \; 0 \; 1 \; 0 \; 1 \; 1 \; 0 \\
   7 & 3 &   144  &\quad 0 \; 0 \; 0 \; 0 \; 0 \; 0 \; 0 \; 0 \; 0 \; 1 \; 1 \; 0 \; 1 \; 0 \; 1 \; 1 \\
   8 & 3 &  3888  &\quad 0 \; 0 \; 0 \; 0 \; 0 \; 0 \; 0 \; 1 \; 0 \; 0 \; 0 \; 1 \; 1 \; 0 \; 0 \; 0 \\
   9 & 3 &  2592  &\quad 0 \; 0 \; 0 \; 0 \; 0 \; 0 \; 0 \; 1 \; 0 \; 0 \; 1 \; 0 \; 1 \; 1 \; 0 \; 0 \\
  10 & 3 &  2592  &\quad 0 \; 0 \; 0 \; 0 \; 0 \; 0 \; 0 \; 1 \; 0 \; 1 \; 1 \; 0 \; 1 \; 0 \; 1 \; 0 \\
  11 & 3 &  3888  &\quad 0 \; 0 \; 0 \; 0 \; 0 \; 0 \; 0 \; 1 \; 1 \; 0 \; 0 \; 0 \; 0 \; 0 \; 1 \; 0 \\
  12 & 3 &  7776  &\quad 0 \; 0 \; 0 \; 0 \; 0 \; 0 \; 0 \; 1 \; 1 \; 0 \; 0 \; 0 \; 0 \; 1 \; 1 \; 0 \\
  13 & 3 &   216  &\quad 0 \; 0 \; 0 \; 1 \; 1 \; 0 \; 0 \; 0 \; 1 \; 1 \; 1 \; 0 \; 1 \; 1 \; 1 \; 1 \\
  \midrule
  14 & 4 &   162  &\quad 0 \; 0 \; 0 \; 0 \; 0 \; 0 \; 0 \; 1 \; 0 \; 0 \; 0 \; 1 \; 0 \; 1 \; 1 \; 0 \\
  15 & 4 &  2592  &\quad 0 \; 0 \; 0 \; 0 \; 0 \; 0 \; 0 \; 1 \; 0 \; 1 \; 1 \; 0 \; 1 \; 0 \; 0 \; 0 \\
  16 & 4 &  5184  &\quad 0 \; 0 \; 0 \; 0 \; 0 \; 0 \; 0 \; 1 \; 1 \; 0 \; 0 \; 1 \; 0 \; 1 \; 1 \; 0 \\
  17 & 4 &   108  &\quad 0 \; 0 \; 0 \; 0 \; 0 \; 1 \; 1 \; 0 \; 0 \; 1 \; 1 \; 0 \; 0 \; 0 \; 0 \; 0 \\
  18 & 4 &   972  &\quad 0 \; 0 \; 0 \; 0 \; 0 \; 1 \; 1 \; 0 \; 0 \; 1 \; 1 \; 0 \; 0 \; 0 \; 0 \; 1 \\
  19 & 4 &  1944  &\quad 0 \; 0 \; 0 \; 0 \; 0 \; 1 \; 1 \; 0 \; 0 \; 1 \; 1 \; 0 \; 0 \; 0 \; 1 \; 0 \\
  20 & 4 &  1944  &\quad 0 \; 0 \; 0 \; 0 \; 0 \; 1 \; 1 \; 0 \; 0 \; 1 \; 1 \; 1 \; 0 \; 0 \; 1 \; 0 \\
  21 & 4 &  7776  &\quad 0 \; 0 \; 0 \; 0 \; 0 \; 1 \; 1 \; 0 \; 0 \; 1 \; 1 \; 1 \; 1 \; 0 \; 0 \; 0 \\
  22 & 4 &  1296  &\quad 0 \; 0 \; 0 \; 0 \; 0 \; 1 \; 1 \; 0 \; 1 \; 0 \; 1 \; 1 \; 0 \; 0 \; 0 \; 0 \\
  23 & 4 &  7776  &\quad 0 \; 0 \; 0 \; 0 \; 0 \; 1 \; 1 \; 0 \; 1 \; 0 \; 1 \; 1 \; 0 \; 0 \; 0 \; 1 \\
  24 & 4 &  3888  &\quad 0 \; 0 \; 0 \; 1 \; 0 \; 1 \; 1 \; 0 \; 1 \; 0 \; 0 \; 0 \; 0 \; 0 \; 1 \; 1 \\
  25 & 4 &  3888  &\quad 0 \; 0 \; 0 \; 1 \; 0 \; 1 \; 1 \; 0 \; 1 \; 0 \; 0 \; 0 \; 1 \; 0 \; 1 \; 1 \\
  \midrule
  26 & 5 &   648  &\quad 0 \; 0 \; 0 \; 0 \; 0 \; 1 \; 1 \; 0 \; 0 \; 1 \; 1 \; 0 \; 1 \; 0 \; 1 \; 1 \\
  27 & 5 &   648  &\quad 0 \; 0 \; 0 \; 1 \; 0 \; 1 \; 1 \; 0 \; 0 \; 1 \; 1 \; 0 \; 1 \; 0 \; 0 \; 0 \\
  28 & 5 &  1296  &\quad 0 \; 0 \; 0 \; 1 \; 0 \; 1 \; 1 \; 0 \; 0 \; 1 \; 1 \; 0 \; 1 \; 0 \; 0 \; 1 \\
  29 & 5 &  1296  &\quad 0 \; 0 \; 0 \; 1 \; 0 \; 1 \; 1 \; 0 \; 1 \; 0 \; 0 \; 0 \; 0 \; 0 \; 0 \; 1 \\
  \midrule
  30 & 6 &    24  &\quad 0 \; 1 \; 1 \; 0 \; 1 \; 0 \; 1 \; 1 \; 1 \; 0 \; 1 \; 1 \; 1 \; 1 \; 0 \; 1 \\
  \bottomrule
  \end{array}
  \]
  \medskip
  \caption{Large orbits of $2 \times 2 \times 2 \times 2$ arrays over $\mathbb{F}_2$}
  \label{table2222modular}
  \end{table}

  \begin{table} \tiny
  \[
  \begin{array}{ccccc}
  & \text{rank} & \text{ones} & \text{size} & \text{representative}
  \\
  \toprule
   1 & 0 &  0 &    1 & 0 \, 0 \, 0 \, 0 \, 0 \, 0 \, 0 \, 0 \, 0 \, 0 \, 0 \, 0 \, 0 \, 0 \, 0 \, 0 \\
  \midrule
   2 & 1 &  1 &   16 & 0 \, 0 \, 0 \, 0 \, 0 \, 0 \, 0 \, 0 \, 0 \, 0 \, 0 \, 0 \, 0 \, 0 \, 0 \, 1 \\ 
   3 & 1 &  2 &   32 & 0 \, 0 \, 0 \, 0 \, 0 \, 0 \, 0 \, 0 \, 0 \, 0 \, 0 \, 0 \, 0 \, 0 \, 1 \, 1 \\ 
   4 & 1 &  4 &   24 & 0 \, 0 \, 0 \, 0 \, 0 \, 0 \, 0 \, 0 \, 0 \, 0 \, 0 \, 0 \, 1 \, 1 \, 1 \, 1 \\ 
   5 & 1 &  8 &    8 & 0 \, 0 \, 0 \, 0 \, 0 \, 0 \, 0 \, 0 \, 1 \, 1 \, 1 \, 1 \, 1 \, 1 \, 1 \, 1 \\ 
   6 & 1 & 16 &    1 & 1 \, 1 \, 1 \, 1 \, 1 \, 1 \, 1 \, 1 \, 1 \, 1 \, 1 \, 1 \, 1 \, 1 \, 1 \, 1 \\
  \midrule
   7 & 2 &  2 &   88 & 0 \, 0 \, 0 \, 0 \, 0 \, 0 \, 0 \, 0 \, 0 \, 0 \, 0 \, 0 \, 0 \, 1 \, 1 \, 0 \\ 
   8 & 2 &  3 &  352 & 0 \, 0 \, 0 \, 0 \, 0 \, 0 \, 0 \, 0 \, 0 \, 0 \, 0 \, 0 \, 0 \, 1 \, 1 \, 1 \\ 
   9 & 2 &  4 &  352 & 0 \, 0 \, 0 \, 0 \, 0 \, 0 \, 0 \, 0 \, 0 \, 0 \, 0 \, 1 \, 1 \, 0 \, 1 \, 1 \\ 
  10 & 2 &  5 &  288 & 0 \, 0 \, 0 \, 0 \, 0 \, 0 \, 0 \, 0 \, 0 \, 0 \, 0 \, 1 \, 1 \, 1 \, 1 \, 1 \\ 
  11 & 2 &  6 &  384 & 0 \, 0 \, 0 \, 0 \, 0 \, 0 \, 0 \, 0 \, 0 \, 0 \, 1 \, 1 \, 1 \, 1 \, 1 \, 1 \\ 
  12 & 2 &  7 &   48 & 0 \, 0 \, 0 \, 0 \, 0 \, 0 \, 1 \, 1 \, 0 \, 1 \, 0 \, 1 \, 0 \, 1 \, 1 \, 1 \\ 
  13 & 2 &  8 &  108 & 0 \, 0 \, 0 \, 0 \, 0 \, 0 \, 1 \, 1 \, 1 \, 1 \, 0 \, 0 \, 1 \, 1 \, 1 \, 1 \\ 
  14 & 2 &  9 &   64 & 0 \, 0 \, 0 \, 0 \, 0 \, 0 \, 0 \, 1 \, 1 \, 1 \, 1 \, 1 \, 1 \, 1 \, 1 \, 1 \\ 
  15 & 2 & 10 &   96 & 0 \, 0 \, 0 \, 0 \, 0 \, 0 \, 1 \, 1 \, 1 \, 1 \, 1 \, 1 \, 1 \, 1 \, 1 \, 1 \\ 
  16 & 2 & 12 &   24 & 0 \, 0 \, 0 \, 0 \, 1 \, 1 \, 1 \, 1 \, 1 \, 1 \, 1 \, 1 \, 1 \, 1 \, 1 \, 1 \\
  \midrule
  17 & 3 &  3 &  208 & 0 \, 0 \, 0 \, 0 \, 0 \, 0 \, 0 \, 0 \, 0 \, 0 \, 0 \, 1 \, 0 \, 1 \, 1 \, 0 \\ 
  18 & 3 &  4 & 1216 & 0 \, 0 \, 0 \, 0 \, 0 \, 0 \, 0 \, 0 \, 0 \, 0 \, 0 \, 1 \, 0 \, 1 \, 1 \, 1 \\ 
  19 & 3 &  5 & 2304 & 0 \, 0 \, 0 \, 0 \, 0 \, 0 \, 0 \, 0 \, 0 \, 0 \, 1 \, 1 \, 1 \, 1 \, 0 \, 1 \\ 
  20 & 3 &  6 & 2512 & 0 \, 0 \, 0 \, 0 \, 0 \, 0 \, 0 \, 0 \, 0 \, 1 \, 1 \, 0 \, 1 \, 1 \, 1 \, 1 \\ 
  21 & 3 &  7 & 2656 & 0 \, 0 \, 0 \, 0 \, 0 \, 0 \, 0 \, 0 \, 0 \, 1 \, 1 \, 1 \, 1 \, 1 \, 1 \, 1 \\ 
  22 & 3 &  8 & 1904 & 0 \, 0 \, 0 \, 0 \, 0 \, 0 \, 0 \, 1 \, 1 \, 1 \, 1 \, 0 \, 1 \, 1 \, 1 \, 1 \\ 
  23 & 3 &  9 & 1056 & 0 \, 0 \, 0 \, 0 \, 0 \, 0 \, 1 \, 1 \, 1 \, 1 \, 0 \, 1 \, 1 \, 1 \, 1 \, 1 \\ 
  24 & 3 & 10 &  656 & 0 \, 0 \, 0 \, 0 \, 0 \, 1 \, 1 \, 0 \, 1 \, 1 \, 1 \, 1 \, 1 \, 1 \, 1 \, 1 \\ 
  25 & 3 & 11 &  576 & 0 \, 0 \, 0 \, 0 \, 0 \, 1 \, 1 \, 1 \, 1 \, 1 \, 1 \, 1 \, 1 \, 1 \, 1 \, 1 \\ 
  26 & 3 & 12 &  256 & 0 \, 0 \, 0 \, 1 \, 1 \, 0 \, 1 \, 1 \, 1 \, 1 \, 1 \, 1 \, 1 \, 1 \, 1 \, 1 \\ 
  27 & 3 & 13 &   96 & 0 \, 0 \, 0 \, 1 \, 1 \, 1 \, 1 \, 1 \, 1 \, 1 \, 1 \, 1 \, 1 \, 1 \, 1 \, 1 \\ 
  28 & 3 & 14 &   32 & 0 \, 0 \, 1 \, 1 \, 1 \, 1 \, 1 \, 1 \, 1 \, 1 \, 1 \, 1 \, 1 \, 1 \, 1 \, 1 \\
  \midrule
  29 & 4 &  4 &  228 & 0 \, 0 \, 0 \, 0 \, 0 \, 0 \, 0 \, 0 \, 0 \, 1 \, 1 \, 0 \, 1 \, 0 \, 0 \, 1 \\ 
  30 & 4 &  5 & 1648 & 0 \, 0 \, 0 \, 0 \, 0 \, 0 \, 0 \, 0 \, 0 \, 1 \, 1 \, 0 \, 1 \, 0 \, 1 \, 1 \\ 
  31 & 4 &  6 & 4048 & 0 \, 0 \, 0 \, 0 \, 0 \, 0 \, 0 \, 1 \, 0 \, 0 \, 1 \, 1 \, 1 \, 1 \, 1 \, 0 \\ 
  32 & 4 &  7 & 5856 & 0 \, 0 \, 0 \, 0 \, 0 \, 0 \, 0 \, 1 \, 0 \, 1 \, 1 \, 0 \, 1 \, 1 \, 1 \, 1 \\ 
  33 & 4 &  8 & 6304 & 0 \, 0 \, 0 \, 0 \, 0 \, 0 \, 0 \, 1 \, 0 \, 1 \, 1 \, 1 \, 1 \, 1 \, 1 \, 1 \\ 
  34 & 4 &  9 & 5200 & 0 \, 0 \, 0 \, 0 \, 0 \, 0 \, 1 \, 1 \, 0 \, 1 \, 1 \, 1 \, 1 \, 1 \, 1 \, 1 \\ 
  35 & 4 & 10 & 3200 & 0 \, 0 \, 0 \, 0 \, 0 \, 1 \, 1 \, 1 \, 1 \, 0 \, 1 \, 1 \, 1 \, 1 \, 1 \, 1 \\ 
  36 & 4 & 11 & 1408 & 0 \, 0 \, 0 \, 0 \, 1 \, 1 \, 1 \, 1 \, 1 \, 1 \, 1 \, 1 \, 0 \, 1 \, 1 \, 1 \\ 
  37 & 4 & 12 &  652 & 0 \, 0 \, 0 \, 1 \, 0 \, 1 \, 1 \, 1 \, 1 \, 1 \, 1 \, 1 \, 1 \, 1 \, 1 \, 1 \\ 
  38 & 4 & 13 &  256 & 0 \, 0 \, 1 \, 1 \, 1 \, 1 \, 0 \, 1 \, 1 \, 1 \, 1 \, 1 \, 1 \, 1 \, 1 \, 1 \\ 
  39 & 4 & 14 &   88 & 0 \, 1 \, 1 \, 0 \, 1 \, 1 \, 1 \, 1 \, 1 \, 1 \, 1 \, 1 \, 1 \, 1 \, 1 \, 1 \\ 
  40 & 4 & 15 &   16 & 0 \, 1 \, 1 \, 1 \, 1 \, 1 \, 1 \, 1 \, 1 \, 1 \, 1 \, 1 \, 1 \, 1 \, 1 \, 1 \\
  \midrule
  41 & 5 &  5 &  128 & 0 \, 0 \, 0 \, 0 \, 0 \, 0 \, 0 \, 1 \, 1 \, 0 \, 0 \, 1 \, 0 \, 1 \, 1 \, 0 \\ 
  42 & 5 &  6 & 1008 & 0 \, 0 \, 0 \, 0 \, 0 \, 0 \, 0 \, 1 \, 1 \, 0 \, 0 \, 1 \, 0 \, 1 \, 1 \, 1 \\ 
  43 & 5 &  7 & 2416 & 0 \, 0 \, 0 \, 0 \, 0 \, 0 \, 1 \, 1 \, 0 \, 1 \, 1 \, 0 \, 1 \, 1 \, 0 \, 1 \\ 
  44 & 5 &  8 & 3568 & 0 \, 0 \, 0 \, 0 \, 0 \, 1 \, 1 \, 0 \, 0 \, 1 \, 1 \, 0 \, 1 \, 1 \, 1 \, 1 \\ 
  45 & 5 &  9 & 4016 & 0 \, 0 \, 0 \, 0 \, 0 \, 1 \, 1 \, 0 \, 0 \, 1 \, 1 \, 1 \, 1 \, 1 \, 1 \, 1 \\ 
  46 & 5 & 10 & 3088 & 0 \, 0 \, 0 \, 0 \, 0 \, 1 \, 1 \, 1 \, 0 \, 1 \, 1 \, 1 \, 1 \, 1 \, 1 \, 1 \\ 
  47 & 5 & 11 & 1888 & 0 \, 0 \, 0 \, 1 \, 0 \, 1 \, 1 \, 1 \, 1 \, 1 \, 1 \, 0 \, 1 \, 1 \, 1 \, 1 \\ 
  48 & 5 & 12 &  712 & 0 \, 0 \, 0 \, 1 \, 1 \, 1 \, 1 \, 1 \, 1 \, 1 \, 1 \, 1 \, 0 \, 1 \, 1 \, 1 \\ 
  49 & 5 & 13 &  208 & 0 \, 1 \, 1 \, 0 \, 1 \, 0 \, 1 \, 1 \, 1 \, 1 \, 1 \, 1 \, 1 \, 1 \, 1 \, 1 \\
  \midrule
  50 & 6 &  6 &   56 & 0 \, 0 \, 0 \, 0 \, 0 \, 1 \, 1 \, 0 \, 0 \, 1 \, 1 \, 0 \, 1 \, 0 \, 0 \, 1 \\ 
  51 & 6 &  7 &  448 & 0 \, 0 \, 0 \, 0 \, 0 \, 1 \, 1 \, 0 \, 0 \, 1 \, 1 \, 0 \, 1 \, 0 \, 1 \, 1 \\ 
  52 & 6 &  8 &  848 & 0 \, 0 \, 0 \, 0 \, 0 \, 1 \, 1 \, 1 \, 0 \, 1 \, 1 \, 1 \, 1 \, 0 \, 0 \, 1 \\ 
  53 & 6 &  9 &  928 & 0 \, 0 \, 0 \, 1 \, 0 \, 1 \, 1 \, 0 \, 0 \, 1 \, 1 \, 0 \, 1 \, 1 \, 1 \, 1 \\ 
  54 & 6 & 10 &  848 & 0 \, 0 \, 0 \, 1 \, 0 \, 1 \, 1 \, 0 \, 0 \, 1 \, 1 \, 1 \, 1 \, 1 \, 1 \, 1 \\ 
  55 & 6 & 11 &  416 & 0 \, 0 \, 0 \, 1 \, 0 \, 1 \, 1 \, 1 \, 0 \, 1 \, 1 \, 1 \, 1 \, 1 \, 1 \, 1 \\ 
  56 & 6 & 12 &  160 & 0 \, 1 \, 1 \, 0 \, 1 \, 0 \, 1 \, 1 \, 1 \, 1 \, 0 \, 1 \, 1 \, 1 \, 1 \, 1 \\
  \midrule
  57 & 7 &  7 &   16 & 0 \, 0 \, 0 \, 1 \, 0 \, 1 \, 1 \, 0 \, 0 \, 1 \, 1 \, 0 \, 1 \, 0 \, 0 \, 1 \\ 
  58 & 7 &  8 &  128 & 0 \, 0 \, 0 \, 1 \, 0 \, 1 \, 1 \, 0 \, 0 \, 1 \, 1 \, 0 \, 1 \, 0 \, 1 \, 1 \\ 
  59 & 7 &  9 &  160 & 0 \, 0 \, 0 \, 1 \, 0 \, 1 \, 1 \, 1 \, 1 \, 1 \, 1 \, 0 \, 1 \, 0 \, 0 \, 1 \\ 
  60 & 7 & 10 &  112 & 0 \, 0 \, 1 \, 1 \, 1 \, 1 \, 0 \, 1 \, 1 \, 1 \, 0 \, 1 \, 0 \, 1 \, 1 \, 0 \\ 
  61 & 7 & 11 &   80 & 0 \, 1 \, 1 \, 0 \, 1 \, 0 \, 0 \, 1 \, 1 \, 0 \, 1 \, 1 \, 1 \, 1 \, 1 \, 1 \\ 
  62 & 7 & 12 &   16 & 0 \, 1 \, 1 \, 0 \, 1 \, 0 \, 1 \, 1 \, 1 \, 0 \, 1 \, 1 \, 1 \, 1 \, 1 \, 1 \\
  \midrule
  63 & 8 &  8 &    2 & 0 \, 1 \, 1 \, 0 \, 1 \, 0 \, 0 \, 1 \, 1 \, 0 \, 0 \, 1 \, 0 \, 1 \, 1 \, 0 \\ 
  64 & 8 &  9 &   16 & 0 \, 1 \, 1 \, 0 \, 1 \, 0 \, 0 \, 1 \, 1 \, 0 \, 0 \, 1 \, 0 \, 1 \, 1 \, 1 \\ 
  65 & 8 & 10 &    8 & 0 \, 1 \, 1 \, 0 \, 1 \, 0 \, 1 \, 1 \, 1 \, 1 \, 0 \, 1 \, 0 \, 1 \, 1 \, 0 \\
  \bottomrule
  \end{array}
  \]
  \medskip
  \caption{Minimal representatives for $2 \times 2 \times 2 \times 2$ Boolean arrays}
  \label{table2222boolean}
  \end{table}

  \begin{table} \tiny
  \[
  \begin{array}{ccccc}
  & \text{rank} & \text{ones} & \text{size} & \text{representative}
  \\
  \toprule
   1 & 0 &  0 &    1 & 0 \, 0 \, 0 \, 0 \, 0 \, 0 \, 0 \, 0 \, 0 \, 0 \, 0 \, 0 \, 0 \, 0 \, 0 \, 0 \\
  \midrule
   2 & 1 &  1 &   16 & 0 \, 0 \, 0 \, 0 \, 0 \, 0 \, 0 \, 0 \, 0 \, 0 \, 0 \, 0 \, 0 \, 0 \, 0 \, 1 \\
   3 & 1 &  2 &   32 & 0 \, 0 \, 0 \, 0 \, 0 \, 0 \, 0 \, 0 \, 0 \, 0 \, 0 \, 0 \, 0 \, 0 \, 1 \, 1 \\
   4 & 1 &  4 &   24 & 0 \, 0 \, 0 \, 0 \, 0 \, 0 \, 0 \, 0 \, 0 \, 0 \, 0 \, 0 \, 1 \, 1 \, 1 \, 1 \\
   5 & 1 &  8 &    8 & 0 \, 0 \, 0 \, 0 \, 0 \, 0 \, 0 \, 0 \, 1 \, 1 \, 1 \, 1 \, 1 \, 1 \, 1 \, 1 \\
   6 & 1 & 16 &    1 & 1 \, 1 \, 1 \, 1 \, 1 \, 1 \, 1 \, 1 \, 1 \, 1 \, 1 \, 1 \, 1 \, 1 \, 1 \, 1 \\
  \midrule
   7 & 2 &  2 &   88 & 0 \, 0 \, 0 \, 0 \, 0 \, 0 \, 0 \, 0 \, 0 \, 0 \, 0 \, 0 \, 0 \, 1 \, 1 \, 0 \\
   8 & 2 &  3 &  352 & 0 \, 0 \, 0 \, 0 \, 0 \, 0 \, 0 \, 0 \, 0 \, 0 \, 0 \, 0 \, 0 \, 1 \, 1 \, 1 \\
   9 & 2 &  4 &  352 & 0 \, 0 \, 0 \, 0 \, 0 \, 0 \, 0 \, 0 \, 0 \, 0 \, 0 \, 1 \, 1 \, 0 \, 1 \, 1 \\
  10 & 2 &  5 &  288 & 0 \, 0 \, 0 \, 0 \, 0 \, 0 \, 0 \, 0 \, 0 \, 0 \, 0 \, 1 \, 1 \, 1 \, 1 \, 1 \\
  11 & 2 &  6 &  384 & 0 \, 0 \, 0 \, 0 \, 0 \, 0 \, 0 \, 0 \, 0 \, 0 \, 1 \, 1 \, 1 \, 1 \, 1 \, 1 \\
  12 & 2 &  8 &  108 & 0 \, 0 \, 0 \, 0 \, 0 \, 0 \, 1 \, 1 \, 1 \, 1 \, 0 \, 0 \, 1 \, 1 \, 1 \, 1 \\
  13 & 2 &  9 &   64 & 0 \, 0 \, 0 \, 0 \, 0 \, 0 \, 0 \, 1 \, 1 \, 1 \, 1 \, 1 \, 1 \, 1 \, 1 \, 1 \\
  14 & 2 & 10 &   96 & 0 \, 0 \, 0 \, 0 \, 0 \, 0 \, 1 \, 1 \, 1 \, 1 \, 1 \, 1 \, 1 \, 1 \, 1 \, 1 \\
  15 & 2 & 12 &   24 & 0 \, 0 \, 0 \, 0 \, 1 \, 1 \, 1 \, 1 \, 1 \, 1 \, 1 \, 1 \, 1 \, 1 \, 1 \, 1 \\
  \midrule
  16 & 3 &  3 &  208 & 0 \, 0 \, 0 \, 0 \, 0 \, 0 \, 0 \, 0 \, 0 \, 0 \, 0 \, 1 \, 0 \, 1 \, 1 \, 0 \\
  17 & 3 &  4 & 1216 & 0 \, 0 \, 0 \, 0 \, 0 \, 0 \, 0 \, 0 \, 0 \, 0 \, 0 \, 1 \, 0 \, 1 \, 1 \, 1 \\
  18 & 3 &  5 & 2304 & 0 \, 0 \, 0 \, 0 \, 0 \, 0 \, 0 \, 0 \, 0 \, 0 \, 1 \, 1 \, 1 \, 1 \, 0 \, 1 \\
  19 & 3 &  6 & 2512 & 0 \, 0 \, 0 \, 0 \, 0 \, 0 \, 0 \, 0 \, 0 \, 1 \, 1 \, 0 \, 1 \, 1 \, 1 \, 1 \\
  20 & 3 &  7 & 2704 & 0 \, 0 \, 0 \, 0 \, 0 \, 0 \, 0 \, 0 \, 0 \, 1 \, 1 \, 1 \, 1 \, 1 \, 1 \, 1 \\
  21 & 3 &  8 & 1664 & 0 \, 0 \, 0 \, 0 \, 0 \, 0 \, 0 \, 1 \, 1 \, 1 \, 1 \, 0 \, 1 \, 1 \, 1 \, 1 \\
  22 & 3 &  9 &  864 & 0 \, 0 \, 0 \, 0 \, 0 \, 0 \, 1 \, 1 \, 1 \, 1 \, 0 \, 1 \, 1 \, 1 \, 1 \, 1 \\
  23 & 3 & 10 &  608 & 0 \, 0 \, 0 \, 0 \, 0 \, 1 \, 1 \, 0 \, 1 \, 1 \, 1 \, 1 \, 1 \, 1 \, 1 \, 1 \\
  24 & 3 & 11 &  384 & 0 \, 0 \, 0 \, 0 \, 0 \, 1 \, 1 \, 1 \, 1 \, 1 \, 1 \, 1 \, 1 \, 1 \, 1 \, 1 \\
  25 & 3 & 12 &  256 & 0 \, 0 \, 0 \, 1 \, 1 \, 0 \, 1 \, 1 \, 1 \, 1 \, 1 \, 1 \, 1 \, 1 \, 1 \, 1 \\
  26 & 3 & 13 &   96 & 0 \, 0 \, 0 \, 1 \, 1 \, 1 \, 1 \, 1 \, 1 \, 1 \, 1 \, 1 \, 1 \, 1 \, 1 \, 1 \\
  27 & 3 & 14 &   32 & 0 \, 0 \, 1 \, 1 \, 1 \, 1 \, 1 \, 1 \, 1 \, 1 \, 1 \, 1 \, 1 \, 1 \, 1 \, 1 \\
  \midrule
  28 & 4 &  4 &  228 & 0 \, 0 \, 0 \, 0 \, 0 \, 0 \, 0 \, 0 \, 0 \, 1 \, 1 \, 0 \, 1 \, 0 \, 0 \, 1 \\
  29 & 4 &  5 & 1648 & 0 \, 0 \, 0 \, 0 \, 0 \, 0 \, 0 \, 0 \, 0 \, 1 \, 1 \, 0 \, 1 \, 0 \, 1 \, 1 \\
  30 & 4 &  6 & 4048 & 0 \, 0 \, 0 \, 0 \, 0 \, 0 \, 0 \, 1 \, 0 \, 0 \, 1 \, 1 \, 1 \, 1 \, 1 \, 0 \\
  31 & 4 &  7 & 5856 & 0 \, 0 \, 0 \, 0 \, 0 \, 0 \, 0 \, 1 \, 0 \, 1 \, 1 \, 0 \, 1 \, 1 \, 1 \, 1 \\
  32 & 4 &  8 & 6544 & 0 \, 0 \, 0 \, 0 \, 0 \, 0 \, 0 \, 1 \, 0 \, 1 \, 1 \, 1 \, 1 \, 1 \, 1 \, 1 \\
  33 & 4 &  9 & 5104 & 0 \, 0 \, 0 \, 0 \, 0 \, 0 \, 1 \, 1 \, 0 \, 1 \, 1 \, 1 \, 1 \, 1 \, 1 \, 1 \\
  34 & 4 & 10 & 3056 & 0 \, 0 \, 0 \, 0 \, 0 \, 1 \, 1 \, 1 \, 1 \, 0 \, 1 \, 1 \, 1 \, 1 \, 1 \, 1 \\
  35 & 4 & 11 & 1504 & 0 \, 0 \, 0 \, 0 \, 1 \, 1 \, 1 \, 1 \, 1 \, 1 \, 1 \, 1 \, 0 \, 1 \, 1 \, 1 \\
  36 & 4 & 12 &  448 & 0 \, 0 \, 0 \, 1 \, 0 \, 1 \, 1 \, 1 \, 1 \, 1 \, 1 \, 1 \, 1 \, 1 \, 1 \, 1 \\
  37 & 4 & 13 &  256 & 0 \, 0 \, 1 \, 1 \, 1 \, 1 \, 0 \, 1 \, 1 \, 1 \, 1 \, 1 \, 1 \, 1 \, 1 \, 1 \\
  38 & 4 & 14 &   80 & 0 \, 1 \, 1 \, 0 \, 1 \, 1 \, 1 \, 1 \, 1 \, 1 \, 1 \, 1 \, 1 \, 1 \, 1 \, 1 \\
  39 & 4 & 15 &   16 & 0 \, 1 \, 1 \, 1 \, 1 \, 1 \, 1 \, 1 \, 1 \, 1 \, 1 \, 1 \, 1 \, 1 \, 1 \, 1 \\
  \midrule
  40 & 5 &  5 &  128 & 0 \, 0 \, 0 \, 0 \, 0 \, 0 \, 0 \, 1 \, 1 \, 0 \, 0 \, 1 \, 0 \, 1 \, 1 \, 0 \\
  41 & 5 &  6 & 1008 & 0 \, 0 \, 0 \, 0 \, 0 \, 0 \, 0 \, 1 \, 1 \, 0 \, 0 \, 1 \, 0 \, 1 \, 1 \, 1 \\
  42 & 5 &  7 & 2416 & 0 \, 0 \, 0 \, 0 \, 0 \, 0 \, 1 \, 1 \, 0 \, 1 \, 1 \, 0 \, 1 \, 1 \, 0 \, 1 \\
  43 & 5 &  8 & 3568 & 0 \, 0 \, 0 \, 0 \, 0 \, 1 \, 1 \, 0 \, 0 \, 1 \, 1 \, 0 \, 1 \, 1 \, 1 \, 1 \\
  44 & 5 &  9 & 4304 & 0 \, 0 \, 0 \, 0 \, 0 \, 1 \, 1 \, 0 \, 0 \, 1 \, 1 \, 1 \, 1 \, 1 \, 1 \, 1 \\
  45 & 5 & 10 & 3088 & 0 \, 0 \, 0 \, 0 \, 0 \, 1 \, 1 \, 1 \, 0 \, 1 \, 1 \, 1 \, 1 \, 1 \, 1 \, 1 \\
  46 & 5 & 11 & 1984 & 0 \, 0 \, 0 \, 1 \, 0 \, 1 \, 1 \, 1 \, 1 \, 1 \, 1 \, 0 \, 1 \, 1 \, 1 \, 1 \\
  47 & 5 & 12 &  904 & 0 \, 0 \, 0 \, 1 \, 1 \, 1 \, 1 \, 1 \, 1 \, 1 \, 1 \, 1 \, 0 \, 1 \, 1 \, 1 \\
  48 & 5 & 13 &  160 & 0 \, 1 \, 1 \, 0 \, 1 \, 0 \, 1 \, 1 \, 1 \, 1 \, 1 \, 1 \, 1 \, 1 \, 1 \, 1 \\
  49 & 5 & 14 &    8 & 0 \, 1 \, 1 \, 1 \, 1 \, 1 \, 1 \, 1 \, 1 \, 1 \, 1 \, 1 \, 1 \, 1 \, 1 \, 0 \\
  \midrule
  50 & 6 &  6 &   56 & 0 \, 0 \, 0 \, 0 \, 0 \, 1 \, 1 \, 0 \, 0 \, 1 \, 1 \, 0 \, 1 \, 0 \, 0 \, 1 \\
  51 & 6 &  7 &  448 & 0 \, 0 \, 0 \, 0 \, 0 \, 1 \, 1 \, 0 \, 0 \, 1 \, 1 \, 0 \, 1 \, 0 \, 1 \, 1 \\
  52 & 6 &  8 &  848 & 0 \, 0 \, 0 \, 0 \, 0 \, 1 \, 1 \, 1 \, 0 \, 1 \, 1 \, 1 \, 1 \, 0 \, 0 \, 1 \\
  53 & 6 &  9 &  928 & 0 \, 0 \, 0 \, 1 \, 0 \, 1 \, 1 \, 0 \, 0 \, 1 \, 1 \, 0 \, 1 \, 1 \, 1 \, 1 \\
  54 & 6 & 10 & 1040 & 0 \, 0 \, 0 \, 1 \, 0 \, 1 \, 1 \, 0 \, 0 \, 1 \, 1 \, 1 \, 1 \, 1 \, 1 \, 1 \\
  55 & 6 & 11 &  368 & 0 \, 0 \, 0 \, 1 \, 0 \, 1 \, 1 \, 1 \, 0 \, 1 \, 1 \, 1 \, 1 \, 1 \, 1 \, 1 \\
  56 & 6 & 12 &  172 & 0 \, 1 \, 1 \, 0 \, 1 \, 0 \, 1 \, 1 \, 1 \, 1 \, 0 \, 1 \, 1 \, 1 \, 1 \, 1 \\
  57 & 6 & 13 &   48 & 0 \, 1 \, 1 \, 0 \, 1 \, 1 \, 1 \, 1 \, 1 \, 1 \, 1 \, 1 \, 0 \, 1 \, 1 \, 1 \\
  \midrule
  58 & 7 &  7 &   16 & 0 \, 0 \, 0 \, 1 \, 0 \, 1 \, 1 \, 0 \, 0 \, 1 \, 1 \, 0 \, 1 \, 0 \, 0 \, 1 \\
  59 & 7 &  8 &  128 & 0 \, 0 \, 0 \, 1 \, 0 \, 1 \, 1 \, 0 \, 0 \, 1 \, 1 \, 0 \, 1 \, 0 \, 1 \, 1 \\
  60 & 7 &  9 &  160 & 0 \, 0 \, 0 \, 1 \, 0 \, 1 \, 1 \, 1 \, 1 \, 1 \, 1 \, 0 \, 1 \, 0 \, 0 \, 1 \\
  61 & 7 & 10 &  112 & 0 \, 0 \, 1 \, 1 \, 1 \, 1 \, 0 \, 1 \, 1 \, 1 \, 0 \, 1 \, 0 \, 1 \, 1 \, 0 \\
  62 & 7 & 11 &  128 & 0 \, 1 \, 1 \, 0 \, 1 \, 0 \, 0 \, 1 \, 1 \, 0 \, 1 \, 1 \, 1 \, 1 \, 1 \, 1 \\
  63 & 7 & 12 &   16 & 0 \, 1 \, 1 \, 0 \, 1 \, 0 \, 1 \, 1 \, 1 \, 0 \, 1 \, 1 \, 1 \, 1 \, 1 \, 1 \\
  \midrule
  64 & 8 &  8 &    2 & 0 \, 1 \, 1 \, 0 \, 1 \, 0 \, 0 \, 1 \, 1 \, 0 \, 0 \, 1 \, 0 \, 1 \, 1 \, 0 \\
  65 & 8 &  9 &   16 & 0 \, 1 \, 1 \, 0 \, 1 \, 0 \, 0 \, 1 \, 1 \, 0 \, 0 \, 1 \, 0 \, 1 \, 1 \, 1 \\
  66 & 8 & 10 &    8 & 0 \, 1 \, 1 \, 0 \, 1 \, 0 \, 1 \, 1 \, 1 \, 1 \, 0 \, 1 \, 0 \, 1 \, 1 \, 0 \\
  \bottomrule
  \end{array}
  \]
  \medskip
  \caption{Minimal representatives for $2 \times 2 \times 2 \times 2$ integer arrays}
  \label{table2222integer}
  \end{table}


\section*{Acknowledgements}

These results form part of the Masters thesis of the second author,
written under the supervision of the first author, who was supported by a Discovery Grant from NSERC, the Natural
Sciences and Engineering Research Council of Canada.


\end{document}